\documentclass[a4paper,12pt]{amsart}

\usepackage{amsmath,amssymb,amsthm,stmaryrd}
\usepackage{hyperref} 
\usepackage[T1]{fontenc}
\usepackage{latexsym}
\usepackage{enumerate}
\usepackage{fancyhdr}
\usepackage{latexsym}
\usepackage{dsfont}				
\usepackage{mathrsfs}			
\usepackage[active]{srcltx}		
\usepackage{esint}
\usepackage{pdfsync}

\usepackage{mathrsfs}
\usepackage{graphicx}

\usepackage[usenames,dvipsnames]{xcolor}

\usepackage[nospace,noadjust]{cite}

\newtheorem{Def}{Definition}[section]

\newtheorem{thm}[Def]{Theorem}

\numberwithin{equation}{section}

\newcommand{\R}{\ensuremath{\mathbb{R}}}

\newcommand{\Z}{\ensuremath{\mathbb{Z}}}

\def\XXint#1#2#3{{\setbox0=\hbox{$#1{#2#3}{\int}$}
     \vcenter{\hbox{$#2#3$}}\kern-.5\wd0}}

\usepackage{comment}

\usepackage{color}

\definecolor{gr}{rgb}   {0.,   0.8,   0. } 
\definecolor{bl}{rgb}   {0.,   0.5,   1. } 
\definecolor{mg}{rgb}   {0.7,  0.,    0.7}

\title[Wave packet function spaces]{Using wave packet decompositions to construct function spaces: a user guide}

\author{Pierre Portal}

\address{Pierre Portal, Mathematical Sciences Institute
and France-Australia Mathematical Sciences and Interactions
ANU-CNRS International Research Laboratory,
Australian National University,
Ngunnawal and Ngambri Country,
Canberra ACT 0200, Australia.}
\email{Pierre.Portal@anu.edu.au}

\begin{document}

\begin{abstract}      
We survey the construction of a range of function spaces used in harmonic analysis of PDE, including classical results as well as recent developments.
We frame these constructions in a common conceptual framework, where these function spaces arise as retracts of simple function spaces over phase space, through a projection associated with a wave packet decomposition.
Finding appropriate function spaces to study a given PDE then consists in choosing a relevant wave packet decomposition. We provide a user guide to making such choices, and constructing the corresponding function spaces. This is done mostly by surveying recent constructions, but we also include a new construction, adapted to Schr\"odinger operators of the form $\Delta - V$ for $V \geq 0$, as a sneak peek into upcoming joint work with Dorothee Frey, Andrew Morris, and Adam Sikora.\\
{\it MSC: 42B25, 42B37}.
\end{abstract}

\maketitle

\section{Introduction}
A wave packet decomposition of $L^{2}(\mathbb{R}^{d})$ is a decomposition of the identity of the form $ I=\mathcal{W}^{*} \mathcal{W}$, where 
$$
\mathcal{W}: L^{2} \left(\mathbb{R}^{d}\right) \rightarrow L^{2}\left(\mathbb{R}^{d} \times \mathbb{R}^{d}\right) 
$$
is a lift from the physical space $\mathbb{R}^{d}$ to the phase space $\mathbb{R}^{d}\times \mathbb{R}^{d}$. Heuristically, one can think of $f$ as a distribution of particles, and of $\mathcal{W}(f)(.,\xi)$ as the distribution of those particles that have momentum $\xi$. A simple example is given by the Littlewood-Paley lifting defined by
$$
\mathcal{W} f(x, \xi):=\psi(|\xi D|) f(x) \quad \forall x, \xi \in \mathbb{R}^{d},
$$
for some $\psi \in C_{c}^{\infty}\left(\mathbb{R}_{+}\right)$, with $\psi(|\xi D|)$ defined by $\widehat{\psi(|\xi D|) f}(\eta):=\psi(|\xi \eta|) \widehat{f}(\eta) \;\forall \xi, \eta \in \mathbb{R}^{d}$, 
where $\widehat{f}$ denotes the Fourier transform of $f$.
Given that $\mathcal{W} f$ is radial in the momentum variable $\xi$, and that $|\xi|$ is used as a dilation parameter, it is natural to consider $\mathcal{W}$ (abusing notation slightly) as the map
$$
\mathcal{W}: L^{2} \left(\mathbb{R}^{d}\right) \rightarrow L^{2}\left(\mathbb{R}^{d} \times(0, \infty) ; d x \frac{d \sigma}{\sigma}\right)
$$ 
defined by $\mathcal{W}(f)(x, \sigma):=\psi(\sigma|D|) f(x) \quad \forall x\in \mathbb{R}^{d} \quad \forall \sigma>0$.
Then, assuming the normalisation condition $\int_{0}^{\infty} \psi(\sigma)^{2} \frac{d \sigma}{\sigma}=1$, we have, as required, that
$$ \mathcal{W}^{*} \mathcal{W}(f)=\int_{0}^{\infty} \psi(\sigma|D|)^{2} f \frac{d \sigma}{\sigma}=f \quad \forall f\in L^{2}(\mathbb{R}^{d}).$$
We can then define function spaces over $\mathbb{R}^{d}$ through the norm $\|\mathcal{W}(f)\|_{Y}$ for any choice of function space $Y$ over $\mathbb{R}^{d}\times (0,\infty)$. For instance, choosing
$Y = L^{p}\left(\mathbb{R}^{d} ; L^{2} ((0,+\infty), \frac{d \sigma}{\sigma^{1+2 \alpha}})\right)$ gives an equivalent norm for the Sobolev Space $W^{\alpha, p}$ for $p \in (1,\infty)$ and $\alpha \in \mathbb{R}$:
$$
\|f\|_{W^{\alpha, p}} \sim\left\|\mathcal{W}(f)\right\|_{L^{p}\left(\mathbb{R}^{d} ; L^{2} ((0,+\infty), \frac{d \sigma}{\sigma^{1+2 \alpha}})\right)},
$$
while the same choice for $p=1$ and $\alpha=0$ gives an equivalent norm for the Hardy space $H^{1}(\mathbb{R}^{d})$.
More generally, the Triebel-Lizorkin space $F^{p,q}_{\alpha}$ corresponds to the choice $Y= L^{p}\left(\mathbb{R}^{d} ; L^{q} ((0,+\infty), \frac{d \sigma}{\sigma^{1+q \alpha}})\right)$, and the Besov space $B^{p,q}_{\alpha}$ corresponds to the choice $Y= L^{q}\left((0,+\infty), \frac{d \sigma}{\sigma^{1+q \alpha}});L^{p}(\mathbb{R}^{d})\right)$; see the textbook \cite{sawano}.
Many properties of these function spaces (e.g. interpolation and duality) can be deduced from the fact that they are retracts of $Y$, thanks to the key fact that 
$$
\mathcal{W}\mathcal{W}^{*} \in B(Y);
$$
see \cite{triebel}.
These classical function spaces play a critical role in harmonic analysis and its applications to PDE, because they are invariant under the action of large classes of pseudo-differential operators. There are, however, other crucial operators in harmonic analysis that do not act boundedly on these function spaces, except in the Hilbert space case where $p=q=2$. This is the case, for instance, of $\exp(i\Delta)$ and $\exp(i\sqrt{-\Delta})$; see \cite[Theorems 3.9.4 and 8.3.13]{abhn}. Even for heat semigroups $\exp(-L)$, when $L$ is a divergence form elliptic operator with sufficiently rough complex coefficients, boundedness is sometimes restricted to an interval of values of $p,q$ around $2$; see \cite{a}.\\

One can understand this phenomenon as a lack of adequation between the operators being studied, and the choice of wave packet decomposition used to analyse them (and hence to define relevant function spaces). While the action of pseudo-differential operators can typically be understood from their actions on Littlewood-Paley wave packets, the same cannot be said for operators such as $\exp(i\Delta)$.

 In this paper, we survey other choices of wave packet decomposition that give scales of function spaces well suited to analysing, respectively, $\exp(i\Delta)$ (Section \ref{sec:modulation}), $\exp(i\sqrt{-\Delta})$ (Section \ref{sec:HpFIO}), $\exp(-L)$ for $L$ a rough divergence form elliptic operator (Section \ref{sec:HardyOp}), and $\exp(-L)$ for $L$ the Ornstein-Uhlenbeck operator acting on $L^{2}(\mathbb{R}^{d}; d\gamma)$ where $\gamma$ is the Gaussian measure (Section \ref{sec:HardyOU}). In the final Section \ref{sec:new}, we prove a first result about a new wave packet decomposition that will feature in an upcoming work of the author with Dorothee Frey, Andrew Morris, and Adam Sikora. This wave packet decomposition is well suited to analysing Schr\"odinger operators of the form $\Delta-V$ for a non-negative potential $V$ in the reverse H\"older class $RH_{q}$ (for $q>d/2$).

The title of each section describes the region of phase-space where the wave packets are approximately localised. In this spirit, the Littlewood-Paley wave packet decompositions considered in the introduction could be described as ``localisation: energy levels in momentum".

For each choice, the questions that need to be answered are the following.
\begin{enumerate}
\item
{\bf Question 1.} Is $\mathcal{W}\mathcal{W}^{*}$ bounded on $Y$?
\item 
{\bf Question 2.} Which operators leave the spaces defined as a completion with respect to the norm $\|\mathcal{W}(f)\|_{Y}$ invariant?
\item
{\bf Question 3.} Do these spaces embed into known function spaces?
\end{enumerate}
A positive answer to Question 1 gives interpolation and duality results for the new spaces, via the abstract theory of retracts of Banach spaces. In Question 2, one wants to include solution maps for linear PDE, as well as multiplication operators. This makes the new spaces valid choices to run fixed point arguments in non-linear PDE. Finally, answers to Question 3 allows one to compare the use of these new spaces to more classical choices. This justifies constructing such spaces when they offer sharper results than known optimal results in classical spaces. 

\subsection*{Acknowledgments.}
The paper touches on multiple theories, each with a rich history and extensive literature. The references to these theories given here are by no means exhaustive, or even indicative of the most important results. They are merely chosen to guide the reader towards the literature in a way that fits the narrative of this article.\\

The paper was written as an extension of a talk given at the 2025 RIMS Symposium on Harmonic Analysis and Nonlinear Partial Differential Equations, and has been highly influenced by the symposium discussions. 
I am particularly grateful to Professor Neal Bez and Professor Yutaka Terasawa for their hospitality, and for sharing their mathematical insights, as well as to Professor Yohei Tsutsui for the great organisation of the symposium.\\

{\it AI statement.} ChatGPT 5.6 was used to identify remaining misprints in the final stages of proofreading, between acceptance and publication.

\section{Localisation: balls of fixed radius in momentum.}
\label{sec:modulation}
In this section, we survey the construction of {\bf modulation spaces}, as presented in the book \cite{bo}. These spaces are particularly well suited to the study of operators such as $\exp(i\Delta)$. Given $\psi \in C_{c}^{\infty}$, we consider $\mathcal{W} : L^{2}(\mathbb{R}^{d}) \to L^{2}(\mathbb{R}^{d}\times \mathbb{R}^{d})$ defined, for $f \in L^{2}(\mathbb{R}^{d})$ by
$$
\mathcal{W}f(x,\eta):=\psi (D+\eta)f(x) \quad \forall x,\eta \in \mathbb{R}^{d},
$$
where $\widehat{\psi (D+\eta)f}(\xi) = {\psi}(\xi+\eta)\widehat{f}(\xi)
\quad \forall \xi,\eta \in \mathbb{R}^{d}$. With the normalisation $\| \psi \|_{2} =1$, this defines a wave packet decomposition since
$$
\mathcal{W}^{*}\mathcal{W}f = \int  _{\mathbb{R}^{d}}  \psi (D+\eta)^{2}f d\eta = f.
$$
One then defines, for $p,q \in [1,\infty)$ and $s \in \mathbb{R}$, 
$$
\|f\|_{M^{p,q}_{s}} := \|\mathcal{W}f\|_{L^{q}(\mathbb{R}^{d},(1+|\eta|)^{sq}d\eta;L^{p}(\mathbb{R}^{d},dx))} \quad \forall f \in C_{c}^{\infty}(\mathbb{R}^{d}).
$$
Answering the questions laid out in the introduction, we have that
$$
\mathcal{W}\mathcal{W}^{*} \in B(L^{q}(\mathbb{R}^{d},(1+|\eta|)^{sq}d\eta;L^{p}(\mathbb{R}^{d},dx)))
$$
by a Schur's estimate (since $\psi(D+\eta)\psi(D+\xi)=0$ unless $|\eta-\xi| \lesssim 1$). We also have that a large class of Fourier Integral Operators (FIO) are bounded on $M^{p,q}_{s}$ as proven in \cite{cnr}. This includes $\exp(i\Delta)$ and $\exp(i\sqrt{-\Delta})$, and many other Fourier multipliers. There are even Fourier multipliers that are bounded on $M^{p,q}_{0}$ for all $1<p,q<\infty$ yet only bounded in $L^{p}$ for $p=2$; see \cite{bggo}. For multiplication operators, H\"older type inequalities hold; see \cite{t}. Modulation spaces are thus highly effective in the study of nonlinear Schr\"odinger equations; see for instance \cite{robert-modulation,bc,chkp}.
Turning to Question 3, we have embeddings such as
$$
B^{p,q}_{s} \subset M^{p,q}_{0} \subset B^{p,q}_{\overline{s}}
$$
for $s \geq d(max(0,1/q-min(1/p,1/p')))$ and $\overline{s} \leq d(min(0,1/q-max(1/p,1/p')))$ established in \cite{t}, and proven to be optimal in \cite{st}.
Using these embeddings, and the invariance of modulation spaces under the action of $\exp(i\Delta)$, one recovers the optimal mapping properties of this operator between Sobolev spaces:
$$
\exp(i\Delta) \in B(W^{s,p},W^{-s,p}) \quad \forall p\in(1,\infty) \quad
\forall s>\frac{d}{2}|1/p-1/2|.
$$
We also obtain the same result (and no better; see \cite{ks}) for $\exp(i\sqrt{-\Delta})$. However, the optimal mapping properties of $\exp(i\sqrt{-\Delta})$, as established in \cite{p} and \cite{m}, are:
$$
\exp(i\sqrt{-\Delta}) \in B(W^{s,p},W^{-s,p}) \quad \forall p\in(1,\infty) \quad
\forall s>\frac{(d-1)}{2}|1/p-1/2|.
$$
This suggests that modulation spaces are perfectly suited to studying Schr\"odinger equations, but not optimal for studying wave equations. A heuristic for this phenomenon is the fact that the phase of the symbol $\exp(i|\xi|^{2})$ can be effectively linearised on balls of fixed radius, whereas the phase of the symbol $\exp(i|\xi|)$ cannot. To study wave equations, one should thus use a wave packet decomposition that localises on regions where $\exp(i|\xi|)$ can be effectively replaced by $\exp(i\ell(\xi))$ for some linear map $\ell$. This is the subject of the next section.

\section{Localisation: directional slices of dyadic annuli in momentum.}
\label{sec:HpFIO}
In this section, we survey the construction of {\bf function spaces for decoupling}, introduced in \cite{hpr,hpry}. These spaces are particularly well suited to the study of operators such as $\exp(i\sqrt{-\Delta})$. 
Let $\psi \in C_{c} ^{\infty}$ be a radial function supported in $\{\xi \in \mathbb{R}^{d}\;;\; 1\leq |\xi| \leq 2\}$. We need to refine the Littlewood-Paley wave packet decomposition in an appropriate direction dependent manner. To do so, let us consider the momentum part of phase space in polar coordinates $(\omega,\sigma^{-1}) \in S^{d-1} \times (0,\infty)$, and define, for all $\omega \in S^{d-1}$, a function $\varphi_{\omega} \in C^{\infty}(\mathbb{R}^{d})$ supported in \begin{equation*}
\{\xi\in\mathbb{R}^{d} \;;\;|\xi| \geq 1/4,\; |\frac{\xi}{|\xi|}-\omega| \leq |\xi|^{-1/2}\}.
\end{equation*}
Note that, for a fixed $\sigma \in (0,1)$, $\psi_{\omega,\sigma}:\xi \mapsto \psi (\sigma \xi) \varphi_{\omega}(\xi)$ is supported in a region where $|\frac{\xi}{|\xi|}-\omega|  \lesssim \sigma^{1/2}$. This means that, as $\sigma$ approaches $0$ (i.e. in the high energy limit), $\xi$ is well approximated by the linearisation $(\xi \cdot \omega)\omega$ on the support of $\psi_{\omega,\sigma}$. We define accordingly, for $f \in L^{2}(\mathbb{R}^{d})$,
$$
\mathcal{W}f(x,\omega,\sigma):=
1_{(0,1)}(\sigma) \psi_{\sigma,\omega}(D)f(x) + 
1_{[1,e]}(\sigma)\rho_{0}(D)f(x) \quad \forall x \in \mathbb{R}^{d} \; \forall \omega \in S^{d-1} \; \forall \sigma>0,
$$
where \begin{equation*}
\rho_{0}:=\Big(1-\int_{0}^{1}\int_{S^{d-1}}\psi_{\omega,\sigma}^{2}d \omega \frac{d \sigma}{\sigma}\Big)^{1/2}.
\end{equation*}
Assuming further regularity and normalisation properties (see \cite[Section 3.1]{hpry}), we have that $\mathcal{W}^{*}\mathcal{W} = I$ (see \cite[Lemma 4.3]{hpr}), defining a wave packet decomposition.
We then define function spaces via the norm
$$
\|f\|_{\mathcal{L}^{q,p}_{\mathcal{W},s}}:= \|(I-\Delta)^{s/2}\mathcal{W}f\|_{L^{q}_{\omega}\left(L^{p}_{x}(L^{2}_{\sigma})\right)}, 
$$
for $1<p,q<\infty$ and $s\in \mathbb{R}$, denoting by $L^{q}_{\omega}\left(L^{p}_{x}(L^{2}_{\sigma})\right)$ the space
$
L^{q}(S^{d-1}; L^{p}(\mathbb{R}^{d};L^{2}((0,\infty),\frac{d\sigma}{\sigma})))
$.
Answering Question 1 from the introduction, the key fact that
$$
\mathcal{W}\mathcal{W}^{*} \in B(L^{q}_{\omega}\left(L^{p}_{x}(L^{2}_{\sigma})\right))
$$
is proven in \cite[Theorem 3.4]{hpry}. Turning to Question 2, a very large class of FIO is bounded on $\mathcal{L}^{p,p}_{\mathcal{W},0}$, as proven in \cite[Theorem 6.10]{hpr} (where this space is denoted by $\mathcal{H}^{p}_{FIO}$). The quintessential example $\exp(i\sqrt{-\Delta})$ is still bounded on $\mathcal{L}^{q,p}_{\mathcal{W},s}$ for $q\neq p$ as proven in \cite[Theorem 5.1]{hpry}. General FIO, however, are not bounded on $\mathcal{L}^{q,p}_{\mathcal{W},s}$ unless $p=q$. This is proven in \cite[Section 5.2]{hpry}. The boundedness of multiplication operators on $\mathcal{L}^{p,p}_{\mathcal{W},0}$ is considered in \cite{r}. 
These spaces can then be used to solve non-linear wave equations with rough coefficients, as shown in \cite{hr,rs}.
To conclude this section, let us point out answers to Question 3 given in \cite[Theorem 6.3]{hpry}. For $p\in (1,\infty)$ and $q \leq max(p,2)$, we have that:
$$
W^{s+s(p),p} \subset \mathcal{L}^{q,p}_{\mathcal{W},s}, $$
where $s(p) = (\frac{d-1}{2})|1/p-1/2|$.
If $q \geq min(p,2)$, we have the opposite inclusion:
$$
\mathcal{L}^{q,p}_{\mathcal{W},s} \subset W^{s-s(p),p}.$$
Combining these embeddings with the boundedness of $\exp(i\sqrt{-\Delta})$ on $\mathcal{L}^{q,p}_{\mathcal{W},s}$, we recover the optimal mapping properties of this operator:
$$
\exp(i\sqrt{-\Delta}) \in B(W^{s,p},W^{-s,p}) \quad \forall p\in(1,\infty) \quad
\forall s>\frac{(d-1)}{2}|1/p-1/2|.
$$
This is why $\mathcal{L}^{q,p}_{\mathcal{W},s}$ spaces are more suited to wave equations than modulation spaces are. 
The other interest of these spaces, especially when $q\neq p$, is that they encode quantities appearing in decoupling inequalities. For instance, \cite[Corollary 7.3]{hpry}, gives the following embedding
$$
\mathcal{L}^{2,p}_{\mathcal{W},s} \subset 
\left\{f\in W^{s-s(p)+1/p-\varepsilon}\;;\; supp \widehat{f} \subset \{\xi \in \mathbb{R}^{d}\;;\; R-1 \leq |\xi| \leq R+1\}\right\}
$$
for $R \geq 2$, $p \geq \frac{2(d+1)}{d-1}$, thanks to the $\ell^{2}$ decoupling inequality for the sphere proven in \cite{bd}. See \cite[Section 7]{hpry} for more results of this nature, and \cite{rs} for their use in proving local smoothing estimates for wave equations.

\section{Localisation: adapted energy levels in momentum.}
\label{sec:HardyOp}
In this section, we survey the construction of {\bf Hardy spaces associated with operators}, first introduced in the unpublished manuscript \cite{adm}, and then studied in many articles, starting with \cite{dy,hm,amr}. In previous sections, we have considered wave packet decompositions defined using Fourier multipliers. The corresponding function spaces turned out to be well suited to studying operators that are themselves not too far from Fourier multipliers, such as Calder\'on-Zygmund operators or FIO. Some heat semigroups $\exp(L)$, however, have kernels that lack the necessary smoothness or decay to ensure that $\exp(L)$ is a Calder\'on-Zygmund operator. This is the case, for instance, for uniformly elliptic divergence form operators with $L^{\infty}$ complex coefficients, for which harmonic analysis beyond  Calder\'on-Zygmund theory can nonetheless be developed; see \cite{a}.
In this theory, even the analogue $\nabla L^{-1/2}$ of the Hilbert transform can fail to be bounded on $L^p$ for some $p \in (1,\infty)$. Its $L^2$ boundedness is the celebrated solution of the Kato square root conjecture, established in \cite{ahlmt}. 

To construct appropriate function spaces, the idea is to replace localisation operators given by Fourier multipliers $\psi(D)$ by operators of the form $\psi(L)$. This can be done under minimal functional calculus assumptions on $L$ acting on $L^2$. For simplicity, we assume here that $L$ is non-negative self-adjoint on $L^2$, and take $\psi \in C_{c}^{\infty}(\mathbb{R}_{+})$, so that $\psi(L)$ can be defined using the spectral theorem.
 We then define, for $f \in L^{2}(\mathbb{R}^{d})$,
$$
\mathcal{W}f(x,\sigma):= \psi(\sigma^{2} L)f(x) \quad \forall x \in \mathbb{R}^{d} \quad \forall \sigma>0.
$$
Fixing the normalisation $\int _{0} ^{\infty} \psi(\sigma^{2})^{2} \frac{d \sigma}{\sigma} = 1$, this gives a wave packet decomposition since $\mathcal{W}^{*}\mathcal{W}f = \left(\int _{0} ^{\infty} \psi(\sigma^{2} L)^{2} \frac{d \sigma}{\sigma} \right) f = f$.

We could then consider the same classes of function spaces over phase space as before, namely, $L^{p}(\mathbb{R}^{d};L^{2}((0,\infty),\frac{d \sigma}{\sigma}))$. This would give rise to the Triebel-Lizorkin spaces associated with $L$ studied in \cite{kp}. These spaces are useful in a wide range of problems, but not when the operator $\exp(L)$ fails to be bounded on $L^p$ (as can be the case for some of the divergence form operators with complex coefficients mentioned above). To be able to treat these operators, one uses tent spaces $T^{p,2}$ instead of $L^{p}(\mathbb{R}^{d};L^{2}((0,\infty),\frac{d \sigma}{\sigma}))$. These spaces, introduced in \cite{cms} are defined via the norm
$$
\|F\|_{T^{p,2}}:=
\left( \int _{\mathbb{R}^{d}} \left( \int _{0} ^{\infty} \sigma^{-d} \int _{B(x,\sigma)} |F(y,\sigma)|^{2} dy \frac{d\sigma}{\sigma} \right)^{p/2} dx \right)^{1/p} 
$$
for all $F \in C_{c}^{\infty}(\mathbb{R}^{d} \times (0,\infty))$.
Like $L^{p}(\mathbb{R}^{d};L^{2}((0,\infty),\frac{d \sigma}{\sigma}))$, they measure the size of functions on phase space (that are radial in their momentum component) in an $L^p$ manner in position and an $L^2$ manner in momentum. The difference is that the function is not considered pointwise in position, but averaged over the ball where it is mostly located according to the uncertainty principle. Heuristically, this compensates for potentially pathological behaviour in the $x$ variable (as exhibited, for instance, by solutions of parabolic problems without a Harnack principle).
We now define, for $f \in L^{2}(\mathbb{R}^{d})$ belonging to the range of $L$, and $p \in [1,\infty)$:
$$
\|f\|_{H^{p}_{L}}:= \|\mathcal{W}f\|_{T^{p,2}}.
$$
Note that, as essentially shown in \cite{cms}, $H^{p}_{\Delta} = L^{p}$ for $p\in (1,\infty)$, and $H^{1}_{\Delta} = H^{1}$.\\
To answer the questions from the introduction, $\exp(L)$ needs to satisfy some off-diagonal decay assumptions, which generalise the kernel conditions of Calder\'on-Zygmund theory. Let us use the most common form of such assumptions given (with some constant $c>0$) by
\begin{equation}
\label{eq:GD}
\|1_{E} \exp(\sigma L) (1_{F}f)\|_{2} \lesssim \exp(-c \frac{d(E,F)^{2}}{\sigma}) \|1_{F}f\|_{2} \quad \forall f\in L^{2}(\mathbb{R}^{d}) 
\end{equation}
for all Borel sets $E,F\subset \mathbb{R}^{d}$ and all $\sigma>0$.
Answering Question 1, we have that $\mathcal{W}\mathcal{W}^{*} \in B(T^{p,2})$ for all $p \in [1,\infty)$; see, in particular, the proofs in \cite[Section 7]{hnp} and \cite{amm}.
The fundamental answer to Question 2 is that $L$ has a bounded holomorphic functional calculus on $H^{p}_{L}$; see for instance \cite{hnp,dl}. In many situations, Riesz transforms $\nabla L^{-1/2}$ also map $H^{p}_{L}$ to $L^{p}$; see for instance \cite{hm}. The typical answer to Question 3 is that $H^{p}_{L} = L^{p}$ for a range of values of $p$ including $2$, or at least, there is a norm equivalence on a subspace of $L^p$. Subtle forms of such results are given, for instance, in \cite[Chapter 6]{a}, \cite{fmp}, and \cite{as}.

\section{Localisation: position dependent critical energy levels in momentum.}
\label{sec:HardyOU}
In this section, we survey the construction of {\bf spaces over $\mathbb{R}^{d}$ equipped with the Gaussian measure} $d\gamma(x) = \exp(-|x|^{2})dx$. This is part of the field of Gaussian harmonic analysis, presented in the book \cite{u}.
In this field, the Laplacian operator is replaced by the Ornstein-Uhlenbeck operator $L=\frac{1}{2}\Delta -x \cdot \nabla$ that arises as the self-adjoint operator associated with the energy form
$$
f\mapsto \int _{\mathbb{R}^{d}} |\nabla f(x)|^{2} d\gamma(x).
$$
This operator plays a critical role in many areas, including mathematical physics (where one identifies $L^{2}(\gamma)$ with the Fock space, and $L$ with the observable that counts the number of particles in the system), and stochastic analysis (where $(\exp(tL))_{t \geq 0}$ is the transition semigroup of the stochastic differential equation $dX_{t} = -X_{t}dt + dW_{t}$). Its harmonic analysis is difficult for two reasons: the heat kernel of $\exp(tL)$ is not a Calder\'on-Zygmund kernel, and the measure $\gamma$ is not doubling. To overcome the first difficulty, one can take the same approach as in Section \ref{sec:HardyOp}, and use the functional calculus of $L$ instead of Fourier multipliers. This is particularly natural here because $\gamma$ is not invariant by translation, and using wave packet decompositions based on translation invariant operators would thus be unnatural. Despite the second difficulty, this approach
is good enough to solve various problems, and, in particular, to construct and use Besov or Triebel-Lizorkin spaces adapted to $\gamma$ and $L$; see \cite[Chapter 7]{u}. In some problems, however, the impact of the non-doubling nature of the Gaussian measure is too important, and a more refined wave packet decomposition is needed. This is the case, in particular, at the $p=1$ endpoint. We review here the construction of the corresponding Hardy space $h^{1}(\gamma)$ from \cite{po}.

The additional idea that is required is that the Gaussian measure has a doubling property when restricted to balls $B(x,a\rho(x))$ for a constant $a>0$ and $\rho:x\mapsto min(1,\frac{1}{|x|})$. A form of off-diagonal decay similar to \eqref{eq:GD} holds, provided one restricts the property to such balls, and the inverse energy parameter $\sigma$ to $[0,\rho(x)]$; see \cite[Lemma 3.1]{p}. This leads to the following definition, for $f \in L^{2}(\mathbb{R}^{d};\gamma)$,
$$
\mathcal{W}f(x,\sigma):= \left(1_{[0,\rho(x)]}(\sigma)
\psi(\sigma^{2} L)f(x),\int \limits _{\rho(x)} ^{\infty} \psi(\tau^{2} L)^{2}f(x) \frac{d \tau}{\tau} \right),
$$
as a map from $L^{2}(\mathbb{R}^{d},d\gamma)$ to $L^{2}(\mathbb{R}^{d}\times (0,\infty),d\gamma  \frac{d\sigma}{\sigma})\oplus L^{2}(\mathbb{R}^{d},d\gamma)$, using a function
$\psi$ of the form $z\mapsto Cz^{\frac{N+1}{2}}\exp(-\frac{1+a^{2}}{\alpha}z)$ for appropriate choices of constants $C,a,\alpha,N>0$; see \cite[Lemma 2.1]{p}. Picking $C>0$ appropriately gives a wave packet decomposition since
$$
\mathcal{W}^{*}\mathcal{W} f = \int_{0} ^{\infty} \psi(\sigma^{2}L)^{2} f \frac{d\sigma}{\sigma} = f,
$$
for $f\in L^{2}(\mathbb{R}^{d},d\gamma)$ in the range of $L$.
For such $f$, one then defines 
$$
\|f\|_{h^{p}(\gamma)}:= \|\mathcal{W}f\|_{t^{p,2}(\gamma)\oplus L^{p}(\gamma)},
$$
for $p\in [1,\infty)$, where $t^{p,2}(\gamma)$ denotes the local tent space with norm
$$
\|F\|_{t^{p,2}(\gamma)}:=
\left( \int _{\mathbb{R}^{d}} \left( \int _{0} ^{1} |\gamma(B(x,\sigma))|^{-1} \int _{B(x,\sigma)} |F(y,\sigma)|^{2} d\gamma(y) \frac{d\sigma}{\sigma} \right)^{p/2} d\gamma(x) \right)^{1/p}.$$
Answering Question 1 from the introduction, the fact that $\mathcal{W}\mathcal{W}^{*} \in B(t^{p,2}(\gamma)\oplus L^{p}(\gamma))$ is proven as part of the proof of \cite[Theorem 1.1]{p}. 
We then obtain, as an answer to Question 2, that the Riesz transforms $\partial_{j} L^{-1/2}$ map $h^{p}(\gamma)$ to $L^{p}(\gamma)$; see \cite[Section 6]{p}. Turning to Question 3, we have that $h^{p}(\gamma)=L^{p}(\gamma)$ for $p\in(1,\infty)$; see \cite[Section 5.3]{u}.
The $p=1$ theory from \cite{p} thus extends the celebrated $L^{p}$ boundedness result (for $p \in (1,\infty)$) of Gaussian Riesz transforms in Malliavin calculus. Note that these results could be improved by using a better Gaussian analogue of tent spaces to evaluate $\mathcal{W}f$, as recently constructed in \cite{fsu}.

One of the heuristics that one can take away from the Gaussian theory is that, when doing harmonic analysis of a problem driven by a hamiltonian that includes both kinetic and potential energy, it is useful to localise wave packets to regions of phase space where the potential energy is almost constant (here the balls of the form $B(x,\rho(x))$), and the kinetic energy is of the appropriate magnitude from an uncertainty principle point of view (meaning here that $\sigma \in [0,\rho(x)]$). In the next section, we bring this idea to the study of Schr\"odinger operators, including the (unitary equivalent) companion of the Ornstein-Uhlenbeck operator $\Delta -|x|^{2}$ acting on $L^{2}(\mathbb{R}^{d})$.

\section{Localisation: critical ball in position, dual energy level in momentum.}
\label{sec:new}
Let $d>2$. In this section, we construct a  {\bf new wave packet decomposition adapted to a Schr\"odinger operator} \(L=\Delta-V\) where $V$ denotes the multiplication operator by a function $V \in RH_{q}$ for some $q>\frac{d}{2}$, where
$RH_{q}$ denotes the set
$$\left\{v \in L_{loc}^{1}\left(\mathbb{R}^{d}\right) ; \exists C>0 \; \forall B(x, r) \subset \mathbb{R}^{d}\; \left(r^{-d}\int_{B(x, r)} V(y)^{q} d y\right)^{\frac{1}{q}} \leq Cr^{-d}\int_{B(x, r)} V\right\}.$$
 The critical radius associated with $V$, first systematically considered in \cite{s}, is defined by
$$\rho(x):=\sup \left\{r>0 ; \quad \frac{1}{r^{d-2}} \int_{B(x, r)} V(y) d y \leq 1\right\}.$$
We assume that $\rho \in L^{\infty}(\mathbb{R}^{d})$, and denote its supremum by $R$. This is the case, for instance, if $V=1$ (where $\rho(x)\sim1$) or $V(.)=|.|^{2}$
(where $\rho(x)\sim min(1,\frac{1}{|x|})$ as in Section \ref{sec:HardyOU}).
Cubes centred at $x \in \mathbb{R}^{d}$ and with side length  $\ell(Q) \in [1/2\rho(x),2\rho(x)]$ are called critical. The potential $V$ is essentially constant on such cubes.
One can extract from the standard dyadic cubes of $\mathbb{R}^{d}$ a partition $\mathbb{R}^{d} = \cup _{j\in \mathbb{Z}^{d}}Q_{j}$ by critical cubes with centres $(x_{j})_{j\in\mathbb{Z}^{d}}$.

Moreover, by \cite{dz}, we have that there exists $N_{\rho}\in \mathbb{N}$ such that:

\begin{equation}
\label{eq:overlap}
\forall j,k \in \mathbb{Z}^{d} \quad
|j-k| \geqslant N_{\rho}  \Rightarrow  1_{Q_{k}} 1_{2Q_{j}}=0.
\end{equation}

We will use the multiplication operators by $1_{Q_{j}}$ to localise our wave packets in position. For the momentum localisation, we pick
$\psi \in \mathcal{S}(\R)$ real and even, with $\psi(0)=0$, and such that
\(\widehat{\psi} \in C_{c}^{\infty}((0,+\infty))\). We then consider
\[\psi(L):=\frac{1}{\sqrt{2 \pi}} \int_{\mathbb{R}} \widehat{\psi}(\xi) \exp (i \xi \sqrt{-L}) d \xi.\]

By the finite speed of propagation of $(\cos(t\sqrt{-L}))_{t \in \mathbb{R}}$ (see \cite{remling})
and the finite overlap property \eqref{eq:overlap}, we have that:
\begin{equation}
\label{eq:finitespeed}
\exists N_{\rho} \in \mathbb{N} \quad \forall j,k \in \mathbb{Z}^{d} \quad \forall \sigma \in \left(0, \rho\left(x_{j}\right)\right) \quad
|j-k| \geqslant N_{\rho}  \Rightarrow  1_{Q_{k}} \psi\left(\sigma^{2} L\right)1_{Q_{j}}=0.
\end{equation}
Fixing the normalisation $\int_{0}^{\infty} \psi(\sigma^{2})^{2} \frac{d \sigma}{\sigma}=1$, we have the following reproducing formula for all $f \in L^{2}\left(\mathbb{R}^{d}\right)$:
\begin{align*}
 f&=\sum_{j \in \mathbb{Z}^{2}} 1_{Q_{j}}\int_{0} ^{\infty} \psi\left(\sigma^{2} L\right) \psi\left(\sigma^{2} L\right) (1_{Q_{j}}f) \frac{d \sigma}{\sigma}\\
&=\sum_{j \in \mathbb{Z}^{2}} \left[\int_{0}^{\rho\left(x_{j}\right)}  1_{Q_{j}}\psi\left(\sigma^{2} L\right) \psi\left(\sigma^{2} L\right) (1_{Q_{j}}f)\frac{d \sigma}{\sigma}+R_{j} f\right],
\end{align*} 
where, for all $j \in \mathbb{Z}^{d}$,
$$
R_{j} = 1_{Q_{j}} \left[\int_{\rho\left(x_{j}\right)} ^{\infty} \psi\left(\sigma^{2} L\right) \psi\left(\sigma^{2} L\right)\frac{d \sigma}{\sigma}\right]1_{Q_{j}},
$$
defines a positive operator on $L^{2}(\R^{d})$, since
\begin{align*}
\left\langle R_{j} f, f\right\rangle
= \int_{\rho\left(x_{j}\right)}^{\infty} \|\psi\left(\sigma^{2}L\right) 1_{Q_{j}}f\|_{2}^{2} \frac{d \sigma}{\sigma}.
\end{align*}
We can thus define:
\[\mathcal{W}: L^{2}\left(\mathbb{R}^{d}\right) \rightarrow \ell^{2}\left(\mathbb{Z}^{2} ; L^{2}\left(\mathbb{R}^{d} \times(0, \infty), d x \frac{d \sigma}{\sigma}\right)\right)\]
by
\[\mathcal{W}f(j, x, \sigma):=1_{\left[0, \rho\left(x_{j}\right)\right]}(\sigma)  \psi\left(\sigma^{2} L\right)1_{Q_{j}} f+1_{[R, 2 R]}(\sigma) \ln (2)^{-\frac{1}{2}} R_{j}^{\frac{1}{2}} f\]
for all $ j \in \mathbb{Z}^{d}$, $x\in \mathbb{R}^{d}$ and $\sigma>0$.

We then have a well defined wave packet decomposition since, for all $f\in L^{2}(\mathbb{R}^{d})$, we have that:
\[\mathcal{W}^{*}\mathcal{W}(f)=\sum_{j \in \mathbb{Z}^{2}}\left[\int_{0}^{\rho\left(x_{j}\right)} 1_{Q_{j}}\psi\left(\sigma^{2} L\right) \psi\left(\sigma^{2} L\right) 1_{Q_{j}}f \frac{d \sigma}{\sigma}+R_{j} f\right]=f.\]

The point of this wave packet decomposition is that $R_{j}$ will be a hypercontractive error term, while $1_{Q_{j}}-R_{j}$ will localise in position to a cube where the potential is almost constant. In the meantime, because $\psi \in \mathcal{S}(\mathbb{R}^{d})$, we will have an approximate spectral localisation around the points in the spectrum of $L$ of magnitude roughly $\sigma^{-2}$. In upcoming work with Dorothee Frey, Andrew Morris and Adam Sikora, we shall demonstrate the usefulness of such localisations in PDE applications.
Note, for now, that localisation on balls where the potential is constant coupled with wave packet methods has recently allowed Robert Schippa to prove local smoothing estimates for the wave equation associated with $\Delta-|.|^{2}$ (see \cite{robert}). In the meantime, harmonic analysis results for $\Delta-V$ are known to be better in the case $\rho \neq 0$ than in the case $\rho=0$ of the Laplacian. See, for instance \cite{dz} and \cite{b}. The wave packet decomposition studied here is designed to capture both phenomena.

In the present paper, we initiate the study of this wave packet decomposition by proving the following preliminary (but crucial) result.

\begin{thm}
Assume that, for all $\sigma>0$, $\psi(\sigma^{2}L)$ has an integral kernel
$K_{\sigma^{2}}$ such that: $\exists g, h \in L^{1}((0,+\infty),r^{d-1}dr) \quad \forall x, y \in \mathbb{R}^{d} \quad \forall j,k \in \mathbb{Z}^{d} \quad \forall \sigma \geqslant \rho\left(x_{j}\right)$
\begin{align}
\label{eq:kernel}
\left|1_{Q_{k}}(x) K_{\sigma^{2}}(x, y) 1_{Q_{j}}(y)\right| \leq \sigma^{-d} g\left(\frac{|x-y|}{\sigma}\right) h\left(\frac{\sigma}{\rho\left(x_{j}\right)}\right).
\end{align}
Then \(\mathcal{W}\mathcal{W}^{*}\) extends to a bounded linear operator on \(\ell^{p}\left(\mathbb{Z}^{d} ; L^{p}\left(\left(\mathbb{R}^{d} ; L^{2}\left((0,+\infty), \frac{d \sigma}{\sigma}\right)\right)\right)\right.\) for all \(p \in(1, \infty)\).
\end{thm}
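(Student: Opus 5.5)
The plan is to split $\mathcal{W}=\mathcal{W}_{1}+\mathcal{W}_{2}$, where
\[
\mathcal{W}_{1}f(j,\cdot,\sigma)=1_{[0,\rho(x_{j})]}(\sigma)\psi(\sigma^{2}L)1_{Q_{j}}f,
\qquad
\mathcal{W}_{2}f(j,\cdot,\sigma)=1_{[R,2R]}(\sigma)\ln(2)^{-1/2}R_{j}^{1/2}f .
\]
I would then bound the four blocks $\mathcal{W}_{a}\mathcal{W}_{b}^{*}$ separately on $Y_{p}:=\ell^{p}(\mathbb{Z}^{d};L^{p}(\mathbb{R}^{d};L^{2}(\frac{d\sigma}{\sigma})))$.

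\textbf{Step 1: the block $\mathcal{W}_{1}\mathcal{W}_{1}^{*}$.} For $G\in Y_{p}$,
\[
\mathcal{W}_{1}\mathcal{W}_{1}^{*}G(j,\cdot,\sigma)=1_{[0,\rho(x_{j})]}(\sigma)\,\psi(\sigma^{2}L)1_{Q_{j}}\sum_{k}1_{Q_{k}}\int_{0}^{\rho(x_{k})}\psi(\tau^{2}L)G(k,\cdot,\tau)\frac{d\tau}{\tau}.
\]
By the finite speed property \eqref{eq:finitespeed}, together with its symmetric version obtained by taking adjoints (using that $\rho$ is comparable on neighbouring critical cubes, which follows from \eqref{eq:overlap}), only $k$ with $|j-k|<N_{\rho}$ contribute. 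So for each $j$ the sum has a uniformly bounded number of terms, and the $\ell^{p}$ summation in $j$ costs only a constant. It therefore suffices to bound, uniformly in $j,k$, the operator
\[
G\mapsto \big(1_{[0,\rho(x_{j})]}(\sigma)\psi(\sigma^{2}L)1_{Q_{j}}1_{Q_{k}}P^{*}(1_{[0,\rho(x_{k})]}G)\big)_{\sigma}
\]
on $L^{p}(L^{2})$, where $Pf:=(\psi(\sigma^{2}L)f)_{\sigma>0}$. This operator is a composition of four pieces: truncation in $\tau$, which is a contraction; $P^{*}:L^{p}(L^{2})\to L^{p}$; multiplication by $1_{Q_{j}}1_{Q_{k}}$; and $P:L^{p}\to L^{p}(L^{2})$, followed by another truncation. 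The bound $P\in B(L^{p},L^{p}(L^{2}))$ is a vertical Littlewood--Paley square function estimate for $L$. For $V\geq 0$ the heat kernel of $L$ is dominated by the Gaussian one, and $\psi(\sigma^{2}L)$ has kernel supported in $\{|x-y|\lesssim\sigma\}$ with size $\lesssim\sigma^{-d}$. I would invoke the standard theory for operators with Gaussian bounds (as in Section \ref{sec:HardyOp}). The bound for $P^{*}$ then follows by duality, since $L$ is self-adjoint and $\psi$ is real.

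\textbf{Step 2: the block $\mathcal{W}_{2}\mathcal{W}_{2}^{*}$.} Since $R_{j}=1_{Q_{j}}R_{j}1_{Q_{j}}$ is positive, $R_{j}^{1/2}$ is also supported on $Q_{j}$ (on both sides). Hence $R_{j}^{1/2}R_{k}^{1/2}=0$ for $k\neq j$, and
\[
\mathcal{W}_{2}\mathcal{W}_{2}^{*}G(j,\cdot,\sigma)=\ln(2)^{-1}1_{[R,2R]}(\sigma)R_{j}\int_{R}^{2R}G(j,\cdot,\tau)\frac{d\tau}{\tau}.
\]
So no square root appears here, and it suffices to prove $\sup_{j}\|R_{j}\|_{B(L^{p})}<\infty$. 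To do this, write $R_{j}=\int_{\rho(x_{j})}^{\infty}1_{Q_{j}}\psi(\sigma^{2}L)\,\psi(\sigma^{2}L)1_{Q_{j}}\frac{d\sigma}{\sigma}$. By \eqref{eq:kernel} and Young's inequality, $\|\psi(\sigma^{2}L)1_{Q_{j}}\|_{p\to p}\lesssim\|g\|_{L^{1}(r^{d-1}dr)}\,h(\sigma/\rho(x_{j}))$. The operators $\psi(\sigma^{2}L)$ are uniformly bounded on $L^{p}$ by Step 1's kernel bounds. The change of variables $s=\sigma/\rho(x_{j})$ then gives
\[
\|R_{j}\|_{p\to p}\lesssim\int_{1}^{\infty}|h(s)|\frac{ds}{s}\leq\int_{1}^{\infty}|h(s)|s^{d-1}ds<\infty,
\]
uniformly in $j$.

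\textbf{Step 3: the cross blocks, which I expect to be the main obstacle.} The cross blocks $\mathcal{W}_{1}\mathcal{W}_{2}^{*}$ and $\mathcal{W}_{2}\mathcal{W}_{1}^{*}$ involve $R_{k}^{1/2}$ alone. For instance, $\mathcal{W}_{1}\mathcal{W}_{2}^{*}G(j)=1_{[0,\rho(x_{j})]}\psi(\sigma^{2}L)1_{Q_{j}}\sum_{k}R_{k}^{1/2}\int_{R}^{2R}G(k,\tau)\frac{d\tau}{\tau}$. The support of $R_{k}^{1/2}$ in $Q_{k}$ kills all $k\neq j$, so by Step 1's bound on $P$ it reduces to showing $\sup_{j}\|R_{j}^{1/2}\|_{B(L^{p})}<\infty$. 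Functional calculus does not directly transfer $L^{p}$ bounds from $R_{j}$ to $R_{j}^{1/2}$.

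My first attempt would be the following. Let $C_{j}$ be a common bound for $\|R_{j}\|_{B(L^{p})}$ and $\|R_{j}\|_{B(L^{2})}\leq 1$. Expand
\[
R_{j}^{1/2}=C^{1/2}\Big(1_{Q_{j}}-(1_{Q_{j}}-C^{-1}R_{j})\Big)^{1/2}=C^{1/2}\sum_{n}a_{n}(1_{Q_{j}}-C^{-1}R_{j})^{n},
\]
with $\sum|a_{n}|<\infty$. This requires $\|1_{Q_{j}}-C^{-1}R_{j}\|_{B(L^{p}(Q_{j}))}\leq 1$, which I would try to obtain by Riesz--Thorin from $L^{1}$ and $L^{\infty}$ bounds. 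These in turn should follow from the pointwise kernel bound $|R_{j}(x,y)|\lesssim\rho(x_{j})^{-d}$ on $Q_{j}\times Q_{j}$, which comes from \eqref{eq:kernel} and encodes the hypercontractivity of $R_{j}$. If the contractivity fails, the fallback is to write $R_{j}^{1/2}$ through the Balakrishnan formula
\[
R_{j}^{1/2}=\frac{1}{\pi}\int_{0}^{\infty}\lambda^{-1/2}R_{j}(\lambda+R_{j})^{-1}d\lambda,
\]
and to control the resolvents on $L^{p}(Q_{j})$ using the fact that $R_{j}$ maps $L^{1}(Q_{j})$ into $L^{\infty}(Q_{j})$ with norm $\lesssim\rho(x_{j})^{-d}=|Q_{j}|^{-1}$. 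Combining the four blocks then gives $\mathcal{W}\mathcal{W}^{*}\in B(Y_{p})$.
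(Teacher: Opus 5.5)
Your Steps 1 and 2 are sound. Step 1 is in fact more direct than the paper's treatment of the term $G$, which goes through a $\gamma$-bounded Schur lemma and the $L^{p}$-boundedness of $L^{\theta}1_{Q_j}L^{-\theta}$. Since the $Q_j$ partition $\mathbb{R}^d$, you have $1_{Q_j}1_{Q_k}=0$ for $k\neq j$, so you do not even need finite speed of propagation. Then $P1_{Q_j}P^{*}$ is bounded on $L^{p}(L^{2})$ by the square function estimate and its dual. Your reduction of the cross blocks (the paper's $H_1$ and $H_3$) to $\sup_j\|R_j^{1/2}\|_{B(L^p)}<\infty$ is also correct.

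That uniform bound is the crux, however, and neither of your routes proves it. The binomial-series route cannot be fed by Riesz--Thorin from $L^1$ and $L^\infty$. On $L^{1}(Q_j)$ the operator $1_{Q_j}-C^{-1}R_j$ is the identity minus an integral operator, so its norm is $1+C^{-1}\sup_{y}\int_{Q_j}|R_j(x,y)|\,dx$, which is strictly larger than $1$ whenever $R_j\neq0$; the same holds on $L^\infty$. Its $L^2$ norm is exactly $1$, because the compact operator $R_j$ has $0$ in its spectrum. Interpolation therefore only gives an $L^p$ bound $r>1$, and $\sum_n|a_n|r^n=\infty$ for $r>1$.

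The Balakrishnan fallback is not yet an argument. Near $\lambda=0$ you would need $\sup_{\lambda}\|R_j(\lambda+R_j)^{-1}\|_{B(L^p)}<\infty$ uniformly in $j$. An $L^1\to L^\infty$ bound on $R_j$ gives no control of $(\lambda+R_j)^{-1}$, whose $L^2$ norm is $\lambda^{-1}$.

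The missing idea is that you never need an $L^p$ functional calculus for $R_j$. Instead, pass through $L^2$, where $R_j^{1/2}$ is controlled by the quadratic form of $R_j$, and use that $|Q_j|\sim\rho(x_j)^d$ is finite. By self-adjointness it suffices to treat $p\in(1,2]$. Then, for $f\in L^p$,
\[
\|1_{Q_j}R_j^{1/2}1_{Q_j}f\|_{p}\le|Q_j|^{\frac1p-\frac12}\,\|R_j^{1/2}1_{Q_j}f\|_{2},
\qquad
\|R_j^{1/2}1_{Q_j}f\|_{2}^{2}=\langle R_j1_{Q_j}f,1_{Q_j}f\rangle\le\|1_{Q_j}R_j1_{Q_j}\|_{B(L^{p},L^{p'})}\,\|f\|_{p}^{2}.
\]
Now use the pointwise bound $|R_j(x,y)|\lesssim\rho(x_j)^{-d}$ on $Q_j\times Q_j$, which you already invoke; it is the paper's hypercontractivity estimate. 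Together with H\"older on $Q_j$ it gives $\|1_{Q_j}R_j1_{Q_j}\|_{B(L^p,L^{p'})}\lesssim|Q_j|^{2/p'}\rho(x_j)^{-d}\sim\rho(x_j)^{-d(\frac2p-1)}$. Hence $\|1_{Q_j}R_j^{1/2}1_{Q_j}\|_{B(L^p)}\lesssim|Q_j|^{\frac1p-\frac12}\rho(x_j)^{-d(\frac1p-\frac12)}\sim1$ uniformly in $j$: the powers of $\rho(x_j)$ cancel exactly.

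This $TT^{*}$-plus-H\"older step is precisely how the paper handles $H_1$ and $H_3$. Inserting it in place of your two attempts closes Step 3.
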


Note that the assumption \eqref{eq:kernel} is satisfied by simple potentials such as $V=1$ or $V=|.|^{2}$. We will study, in future work, the class of potentials $V$ for which \eqref{eq:kernel} holds. Results such as \cite{k} and \cite{mp} suggest that it should hold for all $V \in RH_{q}$.
\begin{proof}
We assume, without loss of generality (by duality), that $p \in (1,2]$.
We denote the space $\ell^{p}\left(\mathbb{Z}^{d} ; L^{p}\left(\left(\mathbb{R}^{d} ; L^{2}\left((0,+\infty), \frac{d \sigma}{\sigma}\right)\right)\right)\right)$ by $\ell^{p}(L^{p}(L^{2}))$.\\
Let $F \in \ell^{p}(L^{p}(L^{2})) \cap \ell^{2}(L^{2}(L^{2}))$, and write 
$$
\mathcal{W}\mathcal{W}^{*}F={G}+{H}_{1}+{H}_{2}+H_{3},
$$
where, for all $j \in \mathbb{Z}^{d}$ and all  $\sigma>0$
\begin{align*}
G(j, \cdot, \sigma)&:=1_{\left[0,\rho\left(x_{j}\right)\right]}(\sigma) \psi\left(\sigma^{2} L\right) 1_{Q_{j}} \left(\int_{0}^{\rho(x_j)} \psi\left(\tau^{2} L\right) F(j, \cdot, \tau) \frac{d \tau}{\tau}\right),
\\
H_{1}(j, \cdot, \sigma)&:=1_{[R, 2 R]}(\sigma)(\ln 2)^{-\frac{1}{2}} R_{j}^{\frac{1}{2}}\left(\sum_{k \in \mathbb{Z}^{d}} \int_{0}^{\rho\left(x_{k}\right)} 1_{Q_{k}}\psi\left(\tau^{2} L\right)  F(k, \cdot, \tau) \frac{d \tau}{\tau}\right),\\
H_{2}(j, \cdot, \sigma)&:=1_{[R, 2R]}(\sigma)(\ln 2)^{-1} R_{j}^{\frac{1}{2}}\left(\sum_{k \in \mathbb{Z}^{d}} \int_{R}^{2 R} R_{k}^{\frac{1}{2}} F(k, \cdot, \tau) \frac{d \tau}{\tau}\right), \\
H_{3} F(j, \cdot, \sigma)&:=1_{\left[0, \rho\left(x_{j}\right)\right]}(\sigma) \psi\left(\sigma^{2} L\right) 1_{Q_{j}} (\ln 2)^{-1/2} \left(\sum_{k \in \mathbb{Z}^{d}} \int_{R}^{2 R} R_{k}^{\frac{1}{2}} F(k, \cdot, \tau) \frac{d \tau}{\tau}\right). 
\end{align*}

To estimate each of these terms, we use the following key facts.
\begin{align}
\label{eq:holocalc}
\left\|\left(\int_{0}^{\infty}\left|\psi\left(\tau^{2} L\right)  f \right|^{2}\right)^{\frac{1}{2}}\frac{d \tau}{\tau}\right\|_{L^{p}} &\sim \|f\|_{p} \quad \forall f \in L^{p}(\mathbb{R}^{d}),\\
\left\|\left(\int_{0}^{\infty}\left|\psi\left(\tau^{2} L\right)  F(\tau,.) \right|^{2}\right)^{\frac{1}{2}}\frac{d \tau}{\tau}\right\|_{L^{p}} &\sim \|F\|_{L^p(L^2)} \quad \forall F \in L^{p}(\mathbb{R}^{d};L^{2}((0,\infty);\frac{d\tau}{\tau})).
\end{align}
This is a consequence of the fact that $L$ has a $\gamma$ bounded H\"ormander functional calculus \cite{dos}, and the square function estimates \cite[Theorem 4.9]{kw}. Moreover, we have that
\begin{equation}
\label{eq:rbdSchur}
\left\|\left(\left.\int_{0}^{\infty}\left|\int_{0}^{\infty} \psi\left(\sigma^{2} L\right) \psi\left(\tau^{2} L\right) f \frac{d \tau}{\tau}\right|^{2} \frac{d \sigma}{\sigma} \right)^{\frac{1}{2}} \right. \right\|_{L^{p}}^{p} \lesssim
\left\|\left(\int_{0}^{\infty}\left|\psi\left(\tau^{2} L\right)  f \frac{d \tau}{\tau}\right|^{2}\right)^{\frac{1}{2}}\right\|_{L^{p}}
\end{equation}
for all $f \in L^{p}(\mathbb{R}^{d})$.
Given that $L$ has a $\gamma$-bounded H\"ormander functional calculus (see \cite[Theorem 1.2]{dkk}), and using the fact that 
$$
\psi\left(\sigma^{2} L\right) \psi\left(\tau^{2} L\right) = \frac{\sigma^{2}}{\tau^{2}} \overline{\psi}\left(\sigma^{2} L\right) \underline{\psi}\left(\tau^{2} L\right) = \frac{\tau^{2}}{\sigma^{2}} \underline{\psi}\left(\sigma^{2} L\right) \overline{\psi}\left(\tau^{2} L\right) \quad \forall \sigma,\tau>0,
$$
for some functions $\overline{\psi},\underline{\psi} \in \mathcal{S}(\mathbb{R}^{d})$, \eqref{eq:rbdSchur} is a consequence of the $\gamma$-bounded version of Schur's lemma proven in \cite[Proposition 4.2]{hmp}.
Similarly, we have that, for $j \in \mathbb{Z}^{d}$, 
\begin{equation}
\label{eq:rbdSchur2}
\left\|\left(\left.\int_{0}^{\infty}\left|\int_{0}^{\infty} \psi\left(\sigma^{2} L\right)1_{Q_{j}} \psi\left(\tau^{2} L\right) f \right|^{2}\frac{d \sigma d\tau}{\sigma \tau} \right)^{\frac{1}{2}} \right. \right\|_{L^{p}}^{p} \lesssim
\left\|\left(\int_{0}^{\infty}\left|1_{Q_{j}}\psi\left(\tau^{2} L\right)  f\right|^{2} \frac{d \tau}{\tau}\right)^{\frac{1}{2}}\right\|_{L^{p}}\end{equation}
for all $f \in L^{p}(\mathbb{R}^{d})$
using the fact that,  for $\sigma,\tau, \theta>0$, 
\begin{align*}
\psi\left(\sigma^{2} L\right) 1_{Q_{j}}\psi\left(\tau^{2} L\right) & = (\frac{\sigma^{2}}{\tau^{2}})^{\theta} \overline{\psi}\left(\sigma^{2} L\right) 
L^{\theta}1_{Q_{j}}L^{-\theta}
\underline{\psi}\left(\tau^{2} L\right) \\ & = (\frac{\tau^{2}}{\sigma^{2}})^{\theta} \underline{\psi}\left(\sigma^{2} L\right)
L^{-\theta}1_{Q_{j}}L^{\theta}
 \overline{\psi}\left(\tau^{2} L\right),
\end{align*}
and $L^{-\theta}1_{Q_{j}}L^{\theta}$, as well as $L^{\theta}1_{Q_{j}}L^{-\theta}$ are bounded on $L^{p}$ (uniformly in $j$) for $\theta$ small enough.
For $V=0$, this last fact is proven in \cite{runst-sickel}. The result for $V \in RH_{q}$ and $\theta$ small enough follows from
$$
\|L^{\theta} f\|_{p} \sim \|\Delta^{\theta} f\|_{p} + \|V^{\theta} f\|_{p} \quad \forall f \in D(L^{\theta})
$$ 
which, in turn, is a consequence of the results in \cite{ab}.

We estimate $\|G\|_{\ell^{p}\left(L^{p}\left(L^{2}\right)\right)}$ first, using 
\eqref{eq:rbdSchur2},  followed by \eqref{eq:holocalc}.
\[
\begin{aligned}
&\|G\|_{\ell^{p}\left(L^{p}\left(L^{2}\right)\right)} ^{p} \\ & \quad \lesssim \sum_{j \in \mathbb{Z}^{d}}  \left\|\left(\left.\int_{0}^{\infty}\left|\int_{0}^{\infty} \psi\left(\sigma^{2} L\right) 1_{Q_{j}}\psi\left(\tau^{2} L\right) 1_{\left[0, \rho\left(x_{j}\right)\right]}(\tau)  F(j, \cdot, \tau) \frac{d \tau}{\tau}\right|^{2} \frac{d \sigma}{\sigma} \right)^{\frac{1}{2}} \right. \right\|_{L^{p}}^{p} \\
& \quad \lesssim \sum_{k \in \Z^{d}}\left\|\left(\int_{0}^{\infty}\left|1_{\left[0, \rho\left(x_{k}\right)\right]}(\tau) 1_{Q_{k}}\psi\left(\tau^{2} L\right)  F(k, \cdot, \tau) \frac{d \tau}{\tau}\right|^{2}\right)^{\frac{1}{2}}\right\|_{L^{p}}^{p} \\
& \quad \lesssim \|F\|_{\ell^{p}\left(L^{p}\left(L^{2}\right)\right)} ^{p}.
\end{aligned}
\]
\newpage
Turning to the terms $H_{1},H_{2},H_{3}$, the crucial extra remark is that
\begin{equation}
\label{eq:localrest}
R_{j}^{\frac{1}{2}} = 1_{Q_{j}} R_{j}^{\frac{1}{2}}1_{Q_{j}}  \quad \forall j \in \mathbb{Z}^{d}.
\end{equation}
Indeed, for $k \neq j$, we have that $1_{Q_{k}} R_{j}^{\frac{1}{2}}  = R_{j}^{\frac{1}{2}}1_{Q_{k}} = 0$ since, for all $f \in L^{2}(\mathbb{R}^{d})$, we have the following:
$$
\left\|R_{j}^{\frac{1}{2}} 1_{Q_{k}} f\right\|_{2}^{2}=\left\langle R_{j} 1_{Q_{k}} f, 1_{Q_{k}} f\right\rangle =\int_{\rho\left(x_{j}\right)}^{\infty} \langle 1_{Q_{j}} \psi\left(\tau^{2} L\right)^{2}\left(1_{Q_{j}} 1_{Q_{k}} f\right), 1_{Q_{k}} f \rangle \frac{d \tau}{\tau}=0.
$$
The kernel assumption \eqref{eq:kernel} also implies that
\begin{equation}
\label{eq:OD}
\forall p \in (1,\infty) \quad \forall j \in \mathbb{Z}^{d} \quad 
\|1_{Q_{j}}R_{j}1_{Q_{j}}\|_{B(L^{1},L^{\infty})}
\lesssim \rho(x_{j})^{-d}.
\end{equation}
with constant independent of $j$.
Consequently, using $p \leq 2$ and $\rho \in L^{\infty}$, we have that:
\begin{align*}
& \left\|H_{1} \right\|_{\ell^{p}\left(L^{p}\left(L^{2}\right)\right)} \lesssim\left(\sum_{j \in \mathbb{Z}^{d}}\left\|1_{Q_{j}} R_{j}^{\frac{1}{2}} 1_{Q_{j}} \int_{0}^{\rho\left(x_{j}\right)} \psi\left(\tau^{2} L\right) F\left(j, \cdot,\tau\right) \frac{d \tau}{\tau}\right\|_{p}^{p}\right)^{\frac{1}{p}} \\
&\quad \lesssim \left(\sum_{j \in \mathbb{Z}^{d}}
\rho(x_{j})^{d(1-p/2)}
\left\|R_{j}^{\frac{1}{2}} 1_{Q_{j}} \int_{0}^{\rho\left(x_{j}\right)} \psi\left(\tau^{2} L\right) F\left(j, \cdot,\tau\right) \frac{d \tau}{\tau}\right\|_{2}^{p}\right)^{\frac{1}{p}} \\
& \quad \lesssim \left(\sum_{j \in \mathbb{Z}^{d}}\rho(x_{j})^{d(1-p/2)}\left\|R_{j}^{\frac{1}{2}} 1_{Q_{j}}\right\|_{B\left(L^{p}, L^{2}\right)}^{p}\left\|\int_{0}^{\rho\left(x_{j}\right)} \psi\left(\tau^{2} L\right) F\left(j, \cdot,\tau\right) \frac{d \tau}{\tau}\right\|_{p}^{p}\right)^{\frac{1}{p}} \\
&\quad  \lesssim \left(\sum_{j \in \Z^{d}}\rho(x_{j})^{d(1-p/2)}\left\|1_{Q_{j}} R_{j} 1_{Q_{j}}\right\|_{B\left(L^{p}, L^{p^{\prime}}\right)}^{p}\left\|\int_{0}^{\rho\left(x_{j}\right)} \psi\left(\tau^{2} L\right) F\left(j, \cdot,\tau\right)\frac{d \tau}{\tau}\right\|_{p}^{p}\right)^{\frac{1}{p}} \\
&\quad \lesssim \left( \sum_{j \in \mathbb{Z}^{d}}\left\|\left(\int_{0}^{\infty} \left| \int_{0}^{\rho\left(x_{j}\right)} \psi\left(\sigma^{2} L\right) \psi\left(\tau^{2} L\right) F\left(j, \cdot,\tau\right) \frac{d \tau}{\tau}\right|^{2} \frac{d \sigma}{\sigma}\right)^{\frac{1}{2}}\right\|_{p}^{p}\right)^{\frac{1}{p}} \\
& \quad \lesssim\|F\|_{\ell^{p}\left(L^{p}\left(L^{2}\right)\right)},
\end{align*}
where we have used \eqref{eq:rbdSchur} and \eqref{eq:holocalc} in the last line.
The $H_{2}$ term is handled similarly as follows:
\begin{align*}
\left\|H_{2} F\right\|_{\ell^{p}\left(L^{p}\left(L^{2}\right)\right)} &\lesssim\left(\sum_{j \in \mathbb{Z}^{d}}\left\|1_{Q_{j}} R_{j}^{\frac{1}{2}} 1_{Q_{j}} \left(\sum_{k \in \mathbb{Z}^{d}} \int_{R}^{2 R} R_{k}^{\frac{1}{2}} F(k, \cdot, \tau) \frac{d \tau}{\tau}\right)\right\|_{p}^{p}\right)^{\frac{1}{p}} \\
&=\left(\sum_{j \in \mathbb{Z}^{d}}\left\|1_{Q_{j}} R_{j}^{\frac{1}{2}} 1_{Q_{j}} \int_{R}^{2 R} R_{j}^{\frac{1}{2}} F(j, \cdot, \tau) \frac{d \tau}{\tau}\right\|_{p}^{p}\right)^{\frac{1}{p}} \\
&\lesssim\left(\sum_{j \in \mathbb{Z}^{d}}\left\|1_{Q_{j}} R_{j}1_{Q_{j}} \int_{R}^{2 R} F(j, \cdot, \tau) \frac{d \tau}{\tau}\right\|_{p}^{p}\right)^{\frac{1}{p}} \\
& \lesssim \left(\sum_{j \in \Z^{d}}\left\|1_{Q_{j}} R_{j} 1_{Q_{j}}\right\|_{B\left(L^{p}\right)}^{p}\left\|\int_{R}^{2 R} F(j, \cdot, \tau) \frac{d \tau}{\tau}\right\|_{p}^{p}\right)^{\frac{1}{p}} \\
& \lesssim\|F\|_{\ell^{p}\left(L^{p}\left(L^{2}\right)\right)},
\end{align*}
where have used \eqref{eq:OD} in the last step.
We finally turn to $H_{3}$, noting that, by \eqref{eq:finitespeed} and \eqref{eq:localrest} we have that:
\[
\begin{aligned}
\|H_{3}\|_{\ell^{p}\left(L^{p}\left(L^{2}\right)\right)} ^{p}
&= \|(j,\sigma,x) \mapsto 1_{\left[0, \rho\left(x_{j}\right)\right]}(\sigma)  \psi\left(\sigma^{2} L\right)  \int_{R}^{2 R} R_{j}^{\frac{1}{2}} F\left(k, \cdot, \tau\right) \frac{d \tau}{\tau}\|_{\ell^{p}\left(L^{p}\left(L^{2}\right)\right)} ^{p}\\
& \lesssim \sum_{k \in \mathbb{Z}^{d}}\left\|\left(\int_{0}^{\infty}\left|\psi\left(\sigma^{2} L\right) \int_{R}^{2 R} R_{k}^{\frac{1}{2}} F(k, \cdot, \tau) \frac{d \tau}{\tau}\right|^{2} \frac{d \sigma}{\sigma}\right)^{\frac{1}{2}}\right\|_{p}^{p}.
\end{aligned}
\]
We then use \eqref{eq:holocalc}, and the fact that $p \leq 2$, to obtain that
\[
\begin{aligned}
\|H_{3}\|_{\ell^{p}\left(L^{p}\left(L^{2}\right)\right)} ^{p}
& \lesssim 
\sum_{k\in \mathbb{Z}^{d}}\left\|\int_{R}^{2 R} R_{k}^{\frac{1}{2}} F(k, \cdot, \tau) \frac{d \tau}{\tau}\right\|_{p}^{p} \\
& \lesssim 
\sum_{k\in \mathbb{Z}^{d}}\rho(x_{k})^{d(1-p/2)}\left\|\int_{R}^{2 R} R_{k}^{\frac{1}{2}} F(k, \cdot, \tau) \frac{d \tau}{\tau}\right\|_{2}^{p} \\
& \lesssim \sum_{k \in \mathbb{Z}^{d}}\rho(x_{k})^{d(1-p/2)}\left\|R_{k}^{\frac{1}{2}} 1_{Q_{k}}\right\|_{B\left(L^{p}, L^{2}\right)}^{p}\left\|\left(\int_{R}^{2 R}|F(k, \cdot, \tau)|^{2} \frac{d \tau}{\tau}\right)^{\frac{1}{2}}\right\|_{p}^{p} \\
& \lesssim\|F\|_{\ell^{p}\left(L^{p}\left(L^{2}\right)\right)} ^{p},
\end{aligned}
\]
which concludes the proof.

\end{proof}


\end{document}